\newtheorem{theorem}{Theorem}[section]
\newtheorem{definition}[theorem]{Definition}
\newtheorem{lemma}[theorem]{Lemma}
\newtheorem{corollary}[theorem]{Corollary}
\newtheorem{ex}[theorem]{Example}
\newtheorem{remark}[theorem]{Remark}
\numberwithin{equation}{section}
\newcommand{\Q}{\mathbb{Q}}
\newcommand{\Z}{\mathbb{Z}}
\newcommand{\C}{\mathbb{C}}
\newcommand{\End}{\operatorname{End}}
\newcommand{\SL}{\operatorname{SL}}
\newcommand{\Li}{\operatorname{Li}}
\newcommand{\input{tree_1}}{\input{tree_1}}
\newcommand{\zav}{\hat{\zeta}}
\newcommand{\ze}{\zeta^{\bf e}}
\newcommand{\zo}{\zeta^{\bf o}}
\newcommand{\zoe}{\zeta^{\bf oe}}
\newcommand{\zee}{\zeta^{\bf ee}}
\newcommand{\qrs}{q^f_{r,s}}
\newcommand{\abcd}{\begin{pmatrix} a & b \\ c & d \end{pmatrix}}
\title{Modular forms and $q$-analogues of\\ modified Double Zeta Values}
\author{Henrik Bachmann}
\address{Graduate School of Mathematics,  Nagoya University, Nagoya, Japan.}
\email{henrik.bachmann@math.nagoya-u.ac.jp}
\subjclass[2010]{Primary 11F11, 11M32; Secondary 11F67}
\keywords{modular forms, double zeta values, period polynomials, Hecke operators}
\begin{document}
\date{\today}
\maketitle

\markright{\uppercase{Modular forms and} $q$-\uppercase{analogues of modified Double Zeta Values}}

\begin{abstract} 
We present explicit formulas for Hecke eigenforms as linear combinations of q-analogues of modified double zeta values. As an application, we obtain period polynomial relations and sum formulas for these modified double zeta values. These relations have similar shapes as the period polynomial relations of Gangl, Kaneko and Zagier and the usual sum formulas for classical double zeta values. 
\end{abstract}

\section{Introduction}

In \cite{GKZ} Gangl, Kaneko and Zagier gave an explicit connection between cusp forms for the full modular group of weight $r+s$  and  $\Q$-linear relations among the double zeta values
\[ \zeta(r,s) = \sum_{0 < m < n} \frac{1}{m^{r} n^{s}} \,,\quad (r\geq 1, s\geq 2) \,.\]
For example, one of the consequences of their work (see Remark \ref{rem:gkzrel}) is that the first non-trivial cusp form $\Delta(q)=q\prod_{n>0}(1-q^n)^{24}$ in weight $12$ gives rise to the relation

\begin{equation}\label{gkzrel}
14 \zeta(3,9) +42 \zeta(4,8)+75 \zeta(5,7)+95 \zeta(6,6)+84 \zeta(7,5) + 42 \zeta(8,4) = \frac{6248}{691} \zeta(12) \,.
\end{equation}
The connection of this relation to the cusp form $\Delta$ is given by the fact that the coefficients on the left-hand side are obtained by the even period polynomial of $\Delta$.

In this note we will show a similar result for the following modified version of the double zeta values
\begin{equation}\label{eq:zav}
\zav(r,s) =  \sum_{0 < m < n} \frac{1}{ (m+n)^{r} n^{s}} \,,\quad (r\geq 1, s\geq 2) \,,
\end{equation}
and give an even more direct connection between cusp forms and linear relation among them. 
The values \eqref{eq:zav} are special cases of Apostol-Vu double zeta values or Witten zeta functions for $\mathfrak{so}(5)$ (see \cite{Mat,O}).
As an analogue of the relation \eqref{gkzrel} we obtain
\begin{equation}\label{eq:zavrel}
14 \zav(3,9) +42 \zav(4,8)+75 \zav(5,7)+95 \zav(6,6)+84 \zav(7,5) + 42 \zav(8,4) = \frac{1639}{176896}\zeta(12) \,,
\end{equation}
which is not a trivial consequence of \eqref{gkzrel}, since it is expected that $\zav(r,s)$ is in general not a linear combination of $\zeta(r,s)$ and $\zeta(r+s)$. 
The connection of relation \eqref{eq:zavrel} and the cusp form $\Delta$ will be made explicit by writing $\Delta$ as a linear combination of $q$-analogues of the modified double zeta values  $\zav(r,s)$ and the Riemann zeta value $\zeta(k)$. These are $q$-series $\zav_q(r,s), \zeta_q(k) \in \Q[[q]]$ which degenerate to $\zav(r,s)$ and $\zeta(k)$ respectively when $q\rightarrow 1$ (see Lemma \ref{lem:qana}). In general we will write any Hecke eigenform as a linear combination of these $q$-analogues plus a "lower-weight" $q$-series, which vanishes as $q\rightarrow 1$. We denote by $M_k$ and $S_k$ the spaces of modular forms and cusp forms of weight $k$ for the full modular group. The first result of this work is the following.

\begin{theorem}\label{thm:main1} Let $f \in S_k$ be a cuspidal Hecke eigenform with restricted even period polynomial $P^{ev,0}_f$ (see \eqref{eq:defp0} for the definition). Define the coefficients $\qrs \in \C$ by
\begin{equation}\label{eq:qrs}
P^{ev,0}_f(X+Y,X) = \sum_{\substack{r+s=k\\ r,s\geq 1}} \binom{k-2}{r-1} \qrs X^{r-1} Y^{s-1}\,. 
\end{equation}
Then $f$ can be written as
\begin{align*}
 \frac{L_f^*(1)}{2(k-2)!}f(q) = \sum_{\substack{r+s=k\\ r,s\geq 2}} \qrs \,\zav_q(r,s) - \lambda_f \zeta_q(k) - R_f(q) \,,
\end{align*}
where $L^\ast_f$ is the completed L-function of $f$ (see \eqref{eq:lser}), $R_f(q) \in \C[[q]]$ is an explicitly given "lower weight" $q$-series (see Lemma \ref{lem:t2}) and where
\begin{equation}\label{eq:lamf}
\lambda_f = \frac{k-1}{2} \left( \sum_{\substack{r+s=k\\r,s\geq 3 \text{ odd}}} \frac{(-1)^{\frac{s-1}{2}}}{r\, 2^{r-1}} \binom{k-2}{s-1} L_f^*(s) -L_f^*(1) \right)\,.
\end{equation}
\end{theorem}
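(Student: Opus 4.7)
The strategy is to verify the claimed $q$-series identity coefficient by coefficient, using the Eichler--Shimura description of $f$ and the explicit $q$-expansions of $\zav_q(r,s)$ and $\zeta_q(k)$ supplied by Lemma \ref{lem:qana}. First I would unpack $P^{ev,0}_f$ via the Eichler integral
\[
P_f(X) = \int_0^{i\infty} f(\tau)(X-\tau)^{k-2} d\tau,
\]
expanding $(X-\tau)^{k-2}$ binomially to read off the coefficient of $X^{k-2-n}$ in terms of $L_f^*(n+1)$. This gives the restricted even part $P^{ev,0}_f$ as an explicit polynomial whose coefficients are rational combinations of $L_f^*(s)$ for a prescribed parity range of $s$. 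Substituting $(X,Y) \mapsto (X+Y, X)$ and matching the defining expansion \eqref{eq:qrs} then yields closed-form expressions for the $\qrs$ as linear combinations of $L_f^*$-values weighted by binomial coefficients.

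Next I would expand the proposed right-hand side. Using the Lambert-type formula for $\zav_q(r,s)$ from Lemma \ref{lem:qana}, the sum
\[
\sum_{\substack{r+s=k \\ r,s\ge 2}} \qrs\,\zav_q(r,s)
\]
becomes a double $q$-series whose coefficient of $q^n$ should, by the very choice of \eqref{eq:qrs}, collapse into a single evaluation of the form $\sum_{m+\ell=n} P^{ev,0}_f(m+\ell, m)\cdot(\text{explicit rational factor in } m,\ell)$. The key point is that the shift $(X,Y)\mapsto(X+Y,X)$ is tailored so that this resummation matches, via Eichler--Shimura, the Fourier expansion $f(q)=\sum_{n\ge 1} a_n q^n$ of the eigenform up to the stated proportionality constant $L_f^*(1)/(2(k-2)!)$. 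The normalisation constant is produced directly by the Mellin transform $L_f^*(1) = \int_0^{\infty} f(it)\,dt$. Separating the ``boundary'' indices $(r,s)$ with $r=1$ or $s=1$, which appear in $P^{ev,0}_f(X+Y,X)$ but are excluded from the $\zav_q$-sum (whose arguments satisfy $r,s\ge 2$), leaves two kinds of residue: a single-sum piece proportional to $\zeta_q(k)$, and an explicit lower-weight series that is precisely $R_f(q)$.

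The main obstacle, I expect, is pinning down the exact value of $\lambda_f$ in \eqref{eq:lamf}. The alternating signs $(-1)^{(s-1)/2}$ together with the factor $1/(r\,2^{r-1})$ strongly suggest that this expression emerges from evaluating $P^{ev,0}_f(X+Y,X)$ at a distinguished boundary locus (for instance $X=\tfrac12$, $Y=\tfrac12$, reflecting a substitution $\tau\mapsto -1/\tau$ composed with a translation), combined with the functional equation $L_f^*(s) = (-1)^{k/2}L_f^*(k-s)$ to fold the odd interior $L$-values into the $L_f^*(1)$ term. Carrying out this identification demands careful bookkeeping of powers of $2$, $\pi$ and $i$ from the Mellin transform and of the binomial weights in \eqref{eq:qrs}. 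Once $\lambda_f$ is fixed, the $R_f(q)$ term is forced by the identity and can be read off as the lower-weight $q$-series of Lemma \ref{lem:t2}; verifying that it is genuinely of lower weight (i.e. vanishes as $q \to 1$) provides a consistency check on the whole derivation.
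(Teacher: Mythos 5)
There is a genuine gap at the heart of your plan: you never explain \emph{how} the coefficient of $q^n$ on the right-hand side gets identified with the Fourier coefficient $a_n$ of $f$, and the Eichler--Shimura isomorphism alone cannot do this. Eichler--Shimura tells you that $f$ is determined by $P^{ev}_f$, but it gives no formula expressing $a_n$ as a finite sum of values of $P^{ev}_f$ at integer points. The mechanism the paper uses is the compatibility of Hecke operators with period polynomials (Theorem \ref{thm:hoponperiod}, due to Zagier/Choie--Zagier): $P^{ev}_{T_n f} = P^{ev}_f|\tilde{T}_n$, where $\tilde{T}_n$ is an explicit element of $\Q[\operatorname{M}_n]$ supported on matrices of determinant $n$. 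Combined with the eigenform property $T_n f = a_n f$ and the evaluation $P^{ev}_f(0,1) = -L_f^*(1)$, this yields the closed formula $a_n = -\langle P^{ev}_f,\tilde{T}_n\rangle/L_f^*(1)$ (Lemma \ref{lem:anformula}), which is the engine of the whole proof. Your proposal uses the word ``eigenform'' but never invokes the Hecke action, so the central identification is asserted rather than derived. Relatedly, the coefficient of $q^n$ in $\sum_{r,s}\qrs\,\zav_q(r,s)$ is not a single sum over pairs $(m,\ell)$ with $m+\ell=n$ as you write; it is a sum over quadruples $(a,b,c,d)$ with $(a+c)b+ad=n$, i.e.\ over matrices of determinant $n$ in a fundamental domain --- exactly the support of the piece $\tilde{T}^{(1)}_n$ of Zagier's operator. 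Recognizing this is what makes the resummation work.

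Your guess about the origin of $\lambda_f$ (evaluation at $X=Y=\tfrac12$ plus the functional equation) is also off the mark. In the paper the factor $\tfrac{1}{r\,2^{r-1}}$ arises from the second piece $\tilde{T}^{(2)}_n$ of the Hecke element, a sum over upper-triangular matrices $\left(\begin{smallmatrix} a & b\\ 0 & d\end{smallmatrix}\right)$ with $-\tfrac d2 < b\le \tfrac d2$: applying Faulhaber's formula to $\sum_{0<b\le d/2} b^{r-1}$ produces the powers of $2$ and the factor $1/r$, the top term giving the $\zeta_q(k)$ contribution that assembles into $\lambda_f$ and the lower terms giving $R_f(q)$. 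The third piece $\tilde{T}^{(3)}_n$ contributes nothing because $P^{ev,0}_f(0,Y)=0$. Without the decomposition $\tilde{T}_n=\tilde{T}^{(1)}_n+\tilde{T}^{(2)}_n+\tilde{T}^{(3)}_n$ and the underlying Hecke-on-periods theorem, the bookkeeping you describe has nothing to book-keep, so as it stands the proposal does not constitute a proof.
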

We will see that for a cusp form $f \in S_k$ the $R_f(q)$ and $f(q)$ vanish as $q \rightarrow 1$ (after multiplying with $(1-q)^k$). As a corollary of our result we therefore obtain the following analog of the result of Gangl, Kaneko and Zagier for the values $\zav(r,s)$. 
\begin{corollary}\label{cor:main1} For a cuspidal Hecke eigenform $f\in S_k$ the following relation holds
\[\sum_{\substack{r+s = k\\r,s \geq 2}} \qrs \zav(r,s) = \lambda_f \, \zeta(k) \,, \]   
where the coefficients $ \qrs$ and $\lambda_f$ are given by \eqref{eq:qrs} and \eqref{eq:lamf} respectively.
\end{corollary}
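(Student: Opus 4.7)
The plan is to derive Corollary \ref{cor:main1} as an almost direct consequence of Theorem \ref{thm:main1} by multiplying both sides of the identity stated there by $(1-q)^k$ and letting $q\to 1^-$. The identity from the theorem reads
\begin{equation*}
\frac{L_f^*(1)}{2(k-2)!}f(q) \;=\; \sum_{\substack{r+s=k\\ r,s\geq 2}} \qrs\, \zav_q(r,s) \;-\; \lambda_f\, \zeta_q(k) \;-\; R_f(q)\,,
\end{equation*}
so the whole task is to identify which terms survive the limit and which die off.

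First, I would invoke the $q$-to-classical degeneration properties promised by Lemma \ref{lem:qana}: after renormalization by $(1-q)^k$, the $q$-analogues $\zav_q(r,s)$ and $\zeta_q(k)$ tend to $\zav(r,s)$ and $\zeta(k)$ respectively as $q\to 1^-$. This immediately converts the first two terms on the right-hand side into the desired linear combination appearing in the corollary. Next, I would use that $f$ is a cusp form, so its $q$-expansion starts at $q$ and $(1-q)^k f(q)$ vanishes at $q=1$ (cusp forms have the required moderate growth); this kills the left-hand side after normalization. Finally, the remainder term $R_f(q)$, described as a \emph{lower weight} $q$-series in the statement of the theorem (and presumably made precise in Lemma \ref{lem:t2}), vanishes to an order in $(1-q)$ strictly smaller than $k$, so $(1-q)^k R_f(q) \to 0$ as well. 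Collecting all of this, only the three finite pieces remain and one reads off
\begin{equation*}
0 \;=\; \sum_{\substack{r+s=k\\r,s\geq 2}} \qrs\, \zav(r,s) \;-\; \lambda_f\, \zeta(k)\,,
\end{equation*}
which is the claim.

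The genuine obstacles are all hidden inside results that have already been stated or announced in the paper, so in this proof I would simply cite them. The one substantive thing that must be checked by hand is that $(1-q)^k R_f(q)\to 0$, which is a matter of reading off the degree of $R_f$ from the explicit formula of Lemma \ref{lem:t2}; and that $(1-q)^k f(q)\to 0$, which for a cusp form is standard but worth a line of justification (e.g.\ via the estimate $|f(e^{-t})|=O(t^{k/2})$ as $t\to 0^+$, which is much stronger than what is needed). No new machinery is required beyond Theorem \ref{thm:main1} and Lemma \ref{lem:qana}.
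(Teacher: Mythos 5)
Your proposal is correct and follows essentially the same route as the paper: multiply the identity of Theorem \ref{thm:main1} by $(1-q)^k$, let $q\to 1$, apply Lemma \ref{lem:qana} to the $q$-analogues and to the cusp form, and check that $(1-q)^k R_f(q)\to 0$ because $R_f$ is built from $q$-series of weight strictly less than $k$ (the paper justifies this last point via the coefficient bound $a_n=O(n^{K-1})$, $K<k$, from \cite[Proposition 7.2]{BK}). No substantive difference.
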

By the work of Kohnen and Zagier (\cite{KZ}) is is known that there exists a basis of Hecke eigenforms $\{f_i\}$ for $S_k$, such that $P^{ev,0}_{f_i}(X,Y)\in \Q[X,Y]$. Therefore Corollary \ref{cor:main1} gives $\dim S_k$-many $\Q$-linear relations among the modified double zeta values.
As the second result of this work we will write the $q$-analogue $\zeta_q(k)$, which is just the Eisenstein series of weight $k$ without constant term, as a sum over all $\zav_q(r,s) $ with $r+s=k$ and another explicitly given "lower-weight" $q$-series $E_k(q)$.

\begin{theorem} \label{thm:main2} For all even $k\geq 4$ we have
\[ \zeta_q(k) = 2^{k-1} \sum_{\substack{r+s=k\\r\geq 1,s\geq 2}} \zav_q(r,s)  - E_k(q)\,,\]
where the $q$-series $E_k(q) \in \Q[[q]]$ is given by \eqref{eq:ek}.
\end{theorem}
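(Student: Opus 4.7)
The plan is a direct $q$-series manipulation starting from the explicit formulas for $\zav_q(r,s)$ and $\zeta_q(k)$ provided by Lemma \ref{lem:qana}. Writing $\zav_q(r,s)$ as a double sum indexed by pairs $(m,n)$ with $0 < m < n$, the right-hand side of Theorem \ref{thm:main2} becomes an iterated sum; swapping the finite inner sum over $r$ with the outer $(m,n)$-sum yields
\[
\sum_{\substack{r+s=k\\ r\geq 1,\, s\geq 2}} \zav_q(r,s) \;=\; \sum_{0<m<n}\; \sum_{r=1}^{k-2}\, T_{r,k-r}(m,n),
\]
where $T_{r,s}(m,n)$ denotes the $(m,n)$-summand of $\zav_q(r,s)$ read off from Lemma \ref{lem:qana}.

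The key algebraic input is the closed-form evaluation of the inner sum over $r$. Its classical prototype is the partial fraction identity
\[
\sum_{r=1}^{k-2} \frac{1}{X^{r}\, Y^{k-r}} \;=\; \frac{1}{(X-Y)\, Y^{k-1}} \;-\; \frac{1}{(X-Y)\, X^{k-2}\, Y}, \qquad (X \neq Y),
\]
which I would apply with $X$ corresponding to $m+n$ and $Y$ corresponding to $n$, so that $X-Y$ corresponds to $m$; the $q$-analog should give a parallel two-term decomposition of the $r$-sum. This splits the double sum into a \emph{main piece} (carrying no $(m+n)$-type denominator) and a \emph{remainder piece} which is automatically of lower weight in the $q$-analog sense.

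After the substitution $a = m+n$ (under which the constraint $0<m<n$ becomes $a/2<n<a$) and a reorganization of the geometric-series factors, I expect the main piece to reassemble into $\zeta_q(k)$ up to an overall factor of $2^{-(k-1)}$; multiplying through by $2^{k-1}$ then recovers $\zeta_q(k)$ exactly, while $2^{k-1}$ times the remainder piece is precisely the explicit $q$-series $E_k(q)$ of \eqref{eq:ek}. I expect the principal technical obstacle to be the bookkeeping for the constant $2^{k-1}$: it should trace to the fact that the range $a/2 < n < a$ is half of the natural range $0 < n < a$, combined with how the polynomial weights built into $\zav_q(r,s)$ interact with the summand $n^{k-1} q^n/(1-q^n)$ of the Eisenstein-like series $\zeta_q(k)$. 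Once this combinatorial factor is verified, the identity of the two sides follows immediately, with $E_k(q)$ being precisely what remains.
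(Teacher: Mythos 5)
There is a genuine gap, and it sits exactly where you yourself locate the ``principal technical obstacle''. Your first step is sound: writing $\zav_q(r,s)=\sum_{0<n<m}\sum_{b,d>0}\tfrac{b^{r-1}}{(r-1)!}\tfrac{d^{s-1}}{(s-1)!}q^{(n+m)b+md}$ and summing over $r+s=k$, the inner sum $\sum_{r}\binom{k-2}{r-1}b^{r-1}d^{s-1}=(b+d)^{k-2}-b^{k-2}$ is the $q$-side avatar of your partial-fraction prototype and does give a two-term decomposition. But the structural claim you attach to the two terms is false: the piece ``carrying no $(m+n)$-type denominator'' does \emph{not} reassemble into $2^{-(k-1)}\zeta_q(k)$ plus lower weight, and the other piece is \emph{not} automatically of lower weight. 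Already in the classical limit for $k=3$ the decomposition reads $\zav(1,2)=\sum_{0<m<n}\tfrac{1}{mn^2}-\sum_{0<m<n}\tfrac{1}{mn(m+n)}=\zeta(1,2)-\bigl(\zeta(1,2)-\tfrac14\zeta(3)\bigr)$: the first piece equals $\zeta(3)$, not $\tfrac14\zeta(3)$, and the second equals $\tfrac34\zeta(3)$, a full weight-$3$ quantity whose $q$-analogue does not vanish under $(1-q)^3\,\cdot$. The factor $2^{k-1}$ does not come from the range $a/2<n<a$ being ``half'' of $0<n<a$; it comes from a \emph{second} partial-fraction step, $\tfrac{1}{m(a-m)}=\tfrac1a\bigl(\tfrac1m+\tfrac1{a-m}\bigr)$ applied on each anti-diagonal $m+n=a$, after which the two weight-$k$ pieces cancel except for the excluded diagonal $m=n$, contributing $\sum_{a\ \mathrm{even}}2a^{-k}=2^{-(k-1)}\zeta(k)$. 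This second cancellation is the actual content of the identity and is entirely absent from your plan.

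Even granting that step, the theorem asserts an exact identity of $q$-series with the Eulerian-polynomial numerators $Q_r$, and $E_k(q)$ in \eqref{eq:ek} is a very specific combination of $\zeta_q(k-j)$, the odd single-zeta analogues $\zo_q(k-j-l)$, and $q\frac{d}{dq}\zeta_q(k-2)$; these arise from Bernoulli-number corrections to $\sum_{b\leq N}b^{r-1}$ and from parity bookkeeping on the diagonal, none of which your outline engages with, so ``with $E_k(q)$ being precisely what remains'' is asserted rather than established. For comparison, the paper proves the theorem by a different route altogether: it applies Corollary \ref{cor:divisorsum} to write $(k-1)!\,\zeta_q(k)=\sum_{n>0}\langle Y^{k-2}-X^{k-2},\tilde T_n\rangle q^n$ (the Eisenstein period polynomial paired with the Hecke elements) and then evaluates the contributions of $\tilde T_n^{(1)},\tilde T_n^{(2)},\tilde T_n^{(3)}$ as in Lemmas \ref{lem:t1} and \ref{lem:t2}, which is precisely why its proof is restricted to even $k\geq 4$. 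An elementary argument along your lines is plausible and would even be more general (it should cover odd $k$, cf.\ the separate proof of Theorem \ref{cor:main2}), but as written the decisive cancellation between your two pieces is missing.
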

Again by considering $q \rightarrow 1$ the $E_k(q)$ vanishes and we get, for the even weight case (the odd weight case will be proven separately),  the following sum formula.
\begin{theorem} \label{cor:main2} For all $k\geq 3$ we have
\[   \zeta(k) = 2^{k-1}\sum_{\substack{r+s=k\\r\geq 1,s\geq 2}} \zav(r,s)\,.\]
\end{theorem}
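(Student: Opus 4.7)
The plan is a direct computation of $\sum \zav(r,s)$ that works for all $k \geq 3$, with the remark that for even $k$ one may alternatively read off the statement from Theorem~\ref{thm:main2} by taking $q \to 1$, where (after the customary renormalisation by $(1-q)^k$) the series $E_k(q)$ vanishes in the limit. Since Theorem~\ref{thm:main2} is only stated for even $k \geq 4$, the odd case requires a separate argument, and the direct approach below naturally handles both.

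First I would interchange the order of summation and compute the inner sum explicitly. Absolute convergence (automatic for $k \geq 3$) lets us write
\[
\sum_{\substack{r+s=k\\ r \geq 1,\, s \geq 2}} \zav(r,s) \;=\; \sum_{0 < m < n} \sum_{r=1}^{k-2} \frac{1}{(m+n)^r\, n^{k-r}}.
\]
The inner sum is a finite geometric series in the ratio $n/(m+n)$, whose closed form is
\[
\sum_{r=1}^{k-2} \frac{1}{(m+n)^r n^{k-r}} \;=\; \frac{1}{m\, n^{k-1}} \;-\; \frac{1}{m\, n\, (m+n)^{k-2}}.
\]
Substituting back, the first piece contributes $\zeta(1,k-1)$ (convergent because $k-1 \geq 2$), and the second piece contributes $-S_k$ with $S_k := \sum_{0<m<n} \frac{1}{m n (m+n)^{k-2}}$.

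Next I would evaluate $S_k$ by symmetrising over $m, n \geq 1$. Separating out the diagonal $m = n$ (which contributes $\sum_{m\geq 1} \frac{1}{m^2 (2m)^{k-2}} = \frac{\zeta(k)}{2^{k-2}}$) yields
\[
S_k \;=\; \frac{1}{2}\sum_{m,n \geq 1} \frac{1}{m\, n\, (m+n)^{k-2}} \;-\; \frac{\zeta(k)}{2^{k-1}}.
\]
For the full double sum, the partial fraction $\frac{1}{mn} = \frac{1}{m+n}\bigl(\tfrac{1}{m}+\tfrac{1}{n}\bigr)$ together with the $m \leftrightarrow n$ symmetry gives
\[
\sum_{m,n \geq 1} \frac{1}{m\, n\, (m+n)^{k-2}} \;=\; 2\sum_{m, n \geq 1} \frac{1}{m\, (m+n)^{k-1}} \;=\; 2\,\zeta(1,k-1),
\]
where the last step uses the change of variable $N = m+n$ to recognise $\sum_{N \geq 2} H_{N-1}/N^{k-1} = \zeta(1,k-1)$.

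Combining everything, the two $\zeta(1,k-1)$ contributions cancel and one is left with the claimed identity $\sum \zav(r,s) = \zeta(k)/2^{k-1}$. The only delicate point is the bookkeeping around the diagonal $m = n$ in the symmetrisation step; the rest of the argument is formal manipulation of absolutely convergent sums, so I do not anticipate any serious obstacle.
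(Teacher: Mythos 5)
Your proof is correct, and it takes a genuinely different route from the paper. The paper deduces the even-weight case from Theorem \ref{thm:main2} (which rests on the period polynomial of the Eisenstein series and the Hecke-operator computation), and handles odd weight separately via the identity $\zav(r,s)=2^{s-1}(\Li_{r,s}(-1)+\zeta(r,s))-\zeta(r,s)-\zeta(r+s)$, the parity theorem for double polylogarithms, and the two known sum formulas $\sum\zeta(r,s)=\zeta(k)$ and $\sum 2^{s-1}\zeta(r,s)=\frac{k+1}{2}\zeta(k)$ of Ohno--Zudilin. You instead sum the finite geometric series $\sum_{r=1}^{k-2}(m+n)^{-r}n^{-(k-r)}=\frac{1}{mn^{k-1}}-\frac{1}{mn(m+n)^{k-2}}$, symmetrise the second piece over $m,n\geq 1$, and watch the two copies of $\zeta(1,k-1)$ cancel, leaving exactly the diagonal contribution $\zeta(k)/2^{k-1}$; all sums involved are of positive terms and converge for $k\geq 3$, so the rearrangements are legitimate. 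Your argument is more elementary and treats even and odd $k$ uniformly, with no input from modular forms, level-2 double zeta values, or the parity theorem. What it does not give is the refinement that the paper is really after: Theorem \ref{thm:main2} is an identity of $q$-series with an explicit lower-weight correction $E_k(q)$, of which the even-weight sum formula is only the $q\to 1$ shadow, and your computation lives entirely at the level of the limiting numbers (so in particular it sheds no light on the paper's remark that the $q$-analogue identity appears to hold for all $k\geq 3$ but is only proved for even $k$).
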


The contents of this paper are as follows. In Section \ref{sec:qana} we start by giving the definition of the $q$-analogues of the modified double zeta values $\zav(r,s)$. For the proof of Theorem \ref{thm:main1} and \ref{thm:main2} we need the theory of Hecke operators for period polynomials of modular forms, which we will introduce in Section \ref{sec:perpol}. Finally, we write any modular form as a linear combination of $q$-analogues in Section \ref{sec:mfasqana} and give the proofs of the main results.

\subsection*{Acknowledgment}
The author would like to thank Ulf K\"uhn and Nils Matthes for fruitful comments and corrections on an early draft of this work. 

\section{$q$-analogues of modified double zeta values}\label{sec:qana}
In this section we will introduce $q$-analogues of the modified double zeta value $\zav(r,s)$. For classical double (or multiple) zeta values there are various works on different models of $q$-analogues in the literature. An easy way to obtain a $q$-analogue of a zeta value is to replace the appearing natural numbers $n$ in the definition by their $q$-analogues $[n]_q = \frac{1-q^n}{1-q}$, which satisfy $\lim_{q\rightarrow 1} [n]_q = n$. In general a sum of the form 
\[\sum_{0 < m < n} \frac{Q_r(q^{m+n}) }{ [m+n]_q^{r} } \frac{ Q_s(q^{n})}{  [n]_q^{s}} \,, \]
where  $Q_r(t), Q_s \in t \Q[t]$ are polynomials satisfying $Q_r(1)=Q_s(1)=1$, gives a $q$-analogue of $\zav(r,s)$. In the context of modular forms it is convenient to remove the global factor $(1-q)^{r+s}$ in the definition of these $q$-analogues and to use the  polynomials $Q_k \in  t \Q[t]$ defined for $k\geq 1$ by the identity
\begin{equation}\label{eq:qk}
 \frac{Q_k(t)}{(1-t)^k} = \frac{1}{(k-1)!} \sum_{d>0} d^{k-1} t^d\,. 
\end{equation}
We have $Q_1(t) = t$ and for $k\geq 2$ the $Q_k(t)$ are polynomials of degree $k-1$ satisfying $Q_k(1)=1$. These are up to a factor the so called Eulerian polynomials (c.f. \cite[Remark 2.6]{BK}). 
\begin{definition} \label{def:qzeta}For $k,r, s\geq 1$ we define  the $q$-analogues of $\zeta(k)$ and $\zav(r,s)$  by 
\begin{align*} 
\zeta_q(k) &= \sum_{n>0} \frac{Q_k(q^n)}{(1-q^n)^k}\,,
\\
\zav_q(r,s) &= \sum_{0 < n < m} \frac{Q_r(q^{n+m}) }{ (1-q^{n+m})^{r} } \frac{ Q_s(q^{m})}{ (1-q^m)^{s}} \,,
\end{align*}
where the polynomials $Q_j(t)$ for $j\geq 1$ are defined by \eqref{eq:qk}.
\end{definition}

\begin{lemma} \label{lem:qana}
\begin{enumerate}[i)]
\item For $k\geq 2$ and  $r\geq 1, s\geq 2$ we have
\[\lim_{q\rightarrow 1} (1-q)^{k} \zeta_q(k) = \zeta(k)\,,\qquad \lim_{q\rightarrow 1} (1-q)^{r+s} \zav_q(r,s) = \zav(r,s)\,. \]
In particular $\lim_{q\rightarrow 1} (1-q)^{k} \zeta_q(k') = \lim_{q\rightarrow 1} (1-q)^{k} \zav_q(r,s)  = 0$ if $k', r+s < k$.
\item If $f(q) = \sum_{n\geq 0} a_n q^n \in M_k$ is a modular form of weight $k$, then 
 \[ \lim_{q\rightarrow 1} (1-q)^{k} f(q) = (-2\pi i)^k a_0 \,.\]
In particular $\lim_{q\rightarrow 1}(1-q)^{k} f(q) = 0$ if $f \in S_k$ is a cusp form.
\end{enumerate}
\end{lemma}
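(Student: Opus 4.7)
For part (ii), I would apply the modular transformation $f(-1/\tau) = \tau^k f(\tau)$ on the imaginary axis. Taking $\tau = it/(2\pi)$ with $t > 0$ gives $q = e^{2\pi i \tau} = e^{-t}$, so $q \to 1^-$ iff $t \to 0^+$. Since $-1/\tau = 2\pi i/t$ lies very high in the upper half-plane for small $t$, the $q$-expansion of $f$ at $\infty$ yields $f(-1/\tau) = a_0 + O(e^{-4\pi^2/t})$. Combining this with $\tau^{-k} = (2\pi/(it))^k = (-2\pi i)^k/t^k$ and $(1-q)^k = (1-e^{-t})^k = t^k + O(t^{k+1})$ gives $(1-q)^k f(q) \to (-2\pi i)^k a_0$, which vanishes when $f \in S_k$.

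For part (i), I would reduce each limit to the asymptotic behaviour of an associated Dirichlet generating series. Expanding via \eqref{eq:qk} and interchanging summations gives the standard identity
\[ (k-1)!\,\zeta_q(k) = \sum_{N\geq 1}\sigma_{k-1}(N)\,q^N, \]
and since $\sum_N \sigma_{k-1}(N) N^{-s} = \zeta(s)\zeta(s-k+1)$ has a simple pole at $s = k$ with residue $\zeta(k)$, a Tauberian theorem (e.g.\ Karamata) or direct Mellin inversion applied to $\Gamma(s)\zeta(s)\zeta(s-k+1)$ yields $\sum_N \sigma_{k-1}(N)\,q^N \sim (k-1)!\,\zeta(k)/(1-q)^k$ as $q \to 1^-$, hence $(1-q)^k \zeta_q(k) \to \zeta(k)$. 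For $\zav_q(r,s)$ the strategy is analogous: reorganise the double series (again via \eqref{eq:qk}) so that the leading pole at $q = 1$ of the corresponding two-variable zeta-type generating function can be read off; equivalently, each summand $Q_r(q^{n+m})Q_s(q^m)(1-q)^{r+s}/((1-q^{n+m})^r(1-q^m)^s)$ converges termwise to $1/((n+m)^r m^s)$, and interchanging limit with summation is handled by a finite head together with a tail estimate based on the absolute convergence of $\zav(r,s)$ for $r\geq 1,\,s\geq 2$.

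The lower-weight vanishing is a pole-counting consequence: the $q$-series $\zeta_q(k')$ and $\zav_q(r,s)$ have poles at $q = 1$ of order at most $k'$ and $r+s$ respectively (with a harmless logarithmic factor in the degenerate case $k'=1$), so multiplying by $(1-q)^k$ with $k > k'$ (resp.\ $k > r+s$) forces the limit to vanish. Concretely, $(1-q)^k \zeta_q(k') = (1-q)^{k-k'}\cdot(1-q)^{k'}\zeta_q(k')$, and the second factor has a finite limit by the already-proved part.

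The main technical point is the passage to the limit inside the infinite sum defining $\zav_q(r,s)$: a naive dominated-convergence argument fails because the only uniform-in-$q \in (0,1)$ lower bound on $(1-q^a)/(1-q)$ is the constant $1$, which is too weak to dominate the series. The cleanest remedy is the Tauberian/Mellin route, where everything is reduced to residues of explicit Dirichlet series; an elementary head-plus-tail argument also works, using the monotonicity of $(1-q^a)/(1-q)$ in $q$ together with the decay $Q_s(q^m) = O(q^m)$ to control the tail on an interval $[q_0, 1)$ with $q_0$ close to $1$.
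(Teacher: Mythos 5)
First, note that the paper itself does not prove this lemma: it cites \cite[Proposition 6.4 and Corollary 6.5]{BK} for $\zeta_q(k)$ and the modular-form statement, and asserts that $\zav_q(r,s)$ follows ``with a similar argument''. Measured against that, your part (ii) is correct and complete: the computation with $f(-1/\tau)=\tau^k f(\tau)$ along $q=e^{-t}$, the identification $\tau^{-k}=(-2\pi i)^k t^{-k}$ and $(1-e^{-t})^k=t^k(1+O(t))$ is exactly the standard argument. Your treatment of $\zeta_q(k)$ via $(k-1)!\,\zeta_q(k)=\sum_N\sigma_{k-1}(N)q^N$ and the asymptotics coming from the pole of $\zeta(s)\zeta(s-k+1)$ at $s=k$ is also correct (most cleanly run as: $\sum_{N\le x}\sigma_{k-1}(N)\sim\frac{\zeta(k)}{k}x^k$ plus the elementary Abelian theorem for power series with nonnegative coefficients), and the factorization $(1-q)^k=(1-q)^{k-k'}(1-q)^{k'}$ correctly disposes of the lower-weight vanishing, including the logarithmic case $k'=1$.

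The gap is in the limit--sum interchange for $\zav_q(r,s)$, which you correctly single out as the main technical point but do not actually close. The two ingredients you name for the elementary route --- monotonicity of $[a]_q=(1-q^a)/(1-q)$ in $q$ and $Q_s(q^m)=O(q^m)$ --- are insufficient: monotonicity only yields $[a]_q\ge[a]_{q_0}$, and $[a]_{q_0}\le(1-q_0)^{-1}$ is \emph{bounded in $a$}, so the resulting tail majorant has the shape
\[
(1-q_0)^{r+s}\sum_{m>M}\sum_{0<n<m}q^{2m+n}\;\asymp\;\frac{(1-q_0)^{r+s}}{(1-q)^2}\,q^{2M},
\]
which blows up as $q\to1^-$ for every fixed $q_0$; so no finite head controls the tail uniformly on $[q_0,1)$. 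What is actually needed is the $q$-dependent two-regime bound $[a]_q\ge\tfrac12\min\bigl(a,(1-q)^{-1}\bigr)$: for $m\le(1-q)^{-1}$ one gets the summable majorant $m^{-s}$, and for $m>(1-q)^{-1}$ the loss of a factor $(1-q)^{s}$ is beaten by $\sum_{m>(1-q)^{-1}}q^{m}\le e^{-1}(1-q)^{-1}$ precisely because $s\ge2$ (with separate logarithmic bookkeeping for the inner $n$-sum when $r=1$). The Mellin alternative you call cleanest is likewise only gestured at: the relevant Dirichlet series $\sum_N c_N N^{-\sigma}$ with $c_N=\frac{1}{(r-1)!(s-1)!}\sum_{(n+m)b+md=N,\ 0<n<m}b^{r-1}d^{s-1}$ is of Apostol--Vu/Witten type, and its continuation and leading pole at $\sigma=r+s$ cannot simply be ``read off''; the workable version is again partial summation, namely $\sum_{N\le x}c_N\sim\frac{\zav(r,s)}{(r+s)!}x^{r+s}$ by counting lattice points in the simplices $(n+m)u+mv\le x$ (uniformly in $n,m$), followed by the Abelian theorem. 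Either completion is routine, but as written the $\zav_q(r,s)$ half of (i) is incomplete; the paper avoids the issue entirely by deferring to \cite{BK}.
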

\begin{proof}
This follows from Proposition 6.4 and Corollary 6.5 in \cite{BK}, where the notation $[k]= \zeta_q(k)$ is used. The result for $\zav_q(r,s)$ follows with a similar argument as given there for the $q$-series $[r,s]$.
\end{proof}

\section{Period polynomials and Hecke operators}\label{sec:perpol}
We recall the definition and results on period polynomials as they are presented in \cite{Z1}, \cite{Z2} and \cite{Z3}.
Denote for even $k\geq 4$ by $V_k \subset \C[X,Y]$ the space of homogeneous polynomials in two indeterminates of degree $k-2$. The group $\SL_2(\Z)$ acts on the space $V_k$ by
\begin{equation}\label{eq:defactions}
(P|\gamma)(X,Y) = P(aX+bY,cX+dY)\,\qquad \left(P \in V_k\,, \gamma = \abcd \in  \SL_2(\Z)\right).
\end{equation}
Further denote by $S$ and $U$ the following elements in  $\SL_2(\Z)$ 
\[ S = \begin{pmatrix} 0 & -1 \\ 1 & 0 \end{pmatrix}\,,\qquad U = \begin{pmatrix} 1 & -1 \\ 1 & 0 \end{pmatrix}  \,.\]
For a modular form $f(\tau) = \sum_{n\geq 0} a_n q^n \in M_k$, where as usual $\tau$ is an element in the complex upper-half plane and $q=\exp(2\pi i \tau)$, define the even (extended) period polynomial of $f$ by
\[P^{ev}_f(X,Y) = \sum_{\substack{r+s=k\\r,s\geq 1 \text{ odd}}} (-1)^{\frac{s-1}{2}} \binom{k-2}{s-1} L_f^*(s) X^{r-1} Y^{s-1} \in V_k\,. \] 
Here for $\Re(s) \gg 0$ the $L_f^*(s)$ denotes the L-series $L_f(s)=\sum_{n\geq 1} a_n n^{-s}$ of $f$ multiplied by its gamma-factor
\begin{equation}\label{eq:lser}
L_f^*(s) = \int_0^{\infty} \left( f(i y) - a_0 \right) y^{s-1} dy = (2\pi)^{-s} \Gamma(s) L(f,s)\,. 
\end{equation}
The function $L_f^*(s)$ has a meromorphic continuation to all $s$, with simple poles at $s=0$ and $s=k$, and satisfies the functional equation $L_f^*(s) = (-1)^{\frac{k}{2}}L_f^*(k-s)$. Using the modular transformation of $f$, one can check that $P^{ev}_f$ vanishes under the action of $1+S$ and $1+U+U^2$ and therefore it is an element in the space
\[W_k = \left\{ P \in V_k \mid P|(1+S) = P|(1+U+U^2) = 0  \right\}   \,.\]
We decompose $W_k = W^{ev}_k \oplus W^{od}_k $ into the even and odd polynomials and therefore have $P^{ev}_f \in W^{ev}_k$.
As a generalization of the classical Eichler-Shimura isomorphism, which deals with the case of $f$ being a cusp form, Zagier proved the following.
\begin{theorem}(\cite{Z2})\label{thm:eichlershimura}
The map $f \mapsto P^{ev}_f$ is an isomorphism from $M_k$ to $W^{ev}_k$.
\end{theorem}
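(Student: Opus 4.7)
The plan is to realize $P^{ev}_f$ as the even part of a full period polynomial obtained by integrating $f$ against a suitable kernel, verify the cocycle relations from the modular transformation of $f$, and close the argument with a dimension count together with injectivity.

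First I would introduce, for each $f \in M_k$, the full period polynomial
\[
r_f(X,Y) \;=\; \int_0^{i\infty} f(z)\,(X - zY)^{k-2}\, dz \,,
\]
understood via regularization (and the meromorphic continuation of $L_f^*$ described in \eqref{eq:lser}) when the constant term $a_0$ is nonzero. Expanding $(X-zY)^{k-2}$ by the binomial theorem and comparing with \eqref{eq:lser} expresses each coefficient of $r_f$ in terms of the completed $L$-values $L_f^*(s)$ for $s = 1, \dots, k-1$, up to explicit powers of $i$ and binomial factors $\binom{k-2}{s-1}$. Taking the part even in $X$ (equivalently in $Y$, since $k$ is even) recovers $P^{ev}_f$ up to a nonzero scalar.

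Next I would verify that $r_f$, and hence $P^{ev}_f$, lies in $W_k$. The relation $r_f|(1+S) = 0$ follows by substituting $z \mapsto -1/z$ in the integral and using $f(-1/z) = z^k f(z)$; the relation $r_f|(1 + U + U^2) = 0$ follows by decomposing $[0,i\infty]$ into three arcs joined at the $U$-fixed point in the upper half plane and invoking modularity again on each piece. Since the action of $S$ and $U$ preserves the $X$-parity, restricting to the even part gives $P^{ev}_f \in W^{ev}_k$.

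For the isomorphism I would compare dimensions: by Zagier's calculation (see \cite{Z2}), for even $k \geq 4$ one has $\dim W^{ev}_k = \dim M_k$, so the theorem reduces to injectivity. If $P^{ev}_f = 0$, then $L_f^*(s)$ vanishes at all odd $s \in \{1, 3, \dots, k-1\}$; combined with the functional equation $L_f^*(s) = (-1)^{k/2} L_f^*(k-s)$ and the classical Eichler--Shimura theorem on $S_k$ (injectivity of the period map on cusp forms), this forces $f$ to lie in the Eisenstein complement, whereupon a direct computation of the even period polynomial of the weight-$k$ Eisenstein series exhibits a nonzero value, contradicting $P^{ev}_f = 0$.

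The hardest step will be handling the non-cuspidal case uniformly: the naive integral for $r_f$ diverges near $z = 0$ when $a_0 \neq 0$, so both the construction of $r_f$ and the verification of the cocycle relations require a careful regularization consistent with the meromorphic continuation of $L_f^*$ through its simple poles at $s = 0$ and $s = k$. Once that renormalization is pinned down, the remaining steps follow the cuspidal template and feed into the dimension comparison above.
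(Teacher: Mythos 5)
The paper does not prove this theorem at all --- it is imported from Zagier \cite{Z2} --- so your sketch can only be measured against the argument there, and in outline it matches: realize $P^{ev}_f$ as (a nonzero scalar times) the even part of the period integral $\int_0^{i\infty} f(z)(X-zY)^{k-2}\,dz$, verify the relations under $1+S$ and $1+U+U^2$, and conclude by a dimension count plus injectivity. Your coefficient bookkeeping is right: the coefficient of $X^{r-1}Y^{s-1}$ in the integral is $(-1)^{s-1}i^{s}\binom{k-2}{s-1}L_f^*(s)$, whose even-in-$Y$ part is $i$ times the paper's $P^{ev}_f$. One remark on strategy: Zagier's own treatment in \cite{Z2} sidesteps the divergent integral for $a_0\neq 0$ (the step you correctly flag as hardest) by \emph{defining} the period polynomial directly through the completed $L$-values, exactly as in the present paper, and checking the relations $P|(1+S)=P|(1+U+U^2)=0$ from the functional equation of $L_f^*$ and an explicit computation for the Eisenstein series; if you insist on the integral formulation you must subtract $a_0$ and insert correction terms at both endpoints of the path, which is workable but more delicate than the cuspidal template suggests.

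The one step in your outline that is thinner than it looks is injectivity. ``The classical Eichler--Shimura theorem (injectivity of the period map on cusp forms)'' gives injectivity of the \emph{full} period map $g\mapsto r_g$ on $S_k$; what you actually need is injectivity of the \emph{even} part alone, $g\mapsto P^{ev}_g$. That is a strictly finer statement --- equivalent to the nondegeneracy of the pairing between even and odd period polynomials (Haberland's formula), i.e.\ to the refined isomorphisms $W^{ev}_k\cong S_k\oplus\C$ and $W^{od}_k\cong S_k$ --- and it is essentially the same input as your dimension count $\dim W^{ev}_k=\dim S_k+1=\dim M_k$, so you should cite it as such rather than as a corollary of the naive Eichler--Shimura isomorphism (otherwise the argument risks circularity). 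Once that is in place your conclusion is correct: a cusp form with vanishing even period polynomial is zero, and the Eisenstein component is then killed because $P^{ev}_{G_k}=L^*_{G_k}(1)\,(X^{k-2}-Y^{k-2})$ with $L^*_{G_k}(1)\neq 0$. With these two caveats the proposal is a faithful reconstruction of the cited proof.
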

One of the most important structures on the space $M_k$ is the action of the Hecke algebra. For $n \in \Z_{\geq 1}$ denote by $T_n \in \End(M_k)$ the $n$-th Hecke operator. Due do Theorem \ref{thm:eichlershimura} a natural question is, if there is an operator on $W_k^{ev}$, which corresponds to the operator $T_n$ on $M_k$.
One such operator was first given in \cite{Z1} and to define it we first write $\operatorname{M}_n = \left\{ {\tiny \abcd} \mid a,b,c,d \in \Z, ad-bc=n \right\}$ and extend the action \eqref{eq:defactions} linearly to an action of the group ring $\Q[\operatorname{M}_n]$ on $V_k$. For $n\in \Z_{\geq 1}$ we then define the element
\[ \tilde{T}_n  = \sum_{\substack{ad-bc=n \\ a > c > 0\\d > -b > 0}} \left( \abcd + \begin{pmatrix} a & -b \\ -c & d \end{pmatrix} \right) + \sum_{\substack{ad=n\\ -\frac{d}{2} < b \leq \frac{d}{2} }} \begin{pmatrix} a & b \\ 0 & d \end{pmatrix} + \sum_{\substack{ad=n\\ -\frac{a}{2} < c \leq \frac{a}{2} \\ c \neq 0}} \begin{pmatrix} a & 0\\ c & d \end{pmatrix} \in \Q[\operatorname{M}_n] \,.  \]

\begin{theorem}\label{thm:hoponperiod}
The action of $\tilde{T}_n$ on $W_k^{\text{ev}}$ corresponds to the action of $T_n$ on $W_k$, i.e. we have for all $f \in M_k$
\begin{equation}\label{eq:tnthm}
 P^{ev}_{T_n f}(X,Y) = P^{ev}_f |{\tilde{T}_n}(X,Y)\,. 
\end{equation}
\end{theorem}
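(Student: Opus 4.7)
The plan is to prove the identity \eqref{eq:tnthm} via the integral (Eichler) representation of the period polynomial combined with a careful decomposition of paths on the upper half-plane under the action of the matrices in $\operatorname{M}_n$. Recall that the full period polynomial of $f \in M_k$ can be written (after subtracting off the cuspidal part for the non-cusp case) as an integral of the form
\[
r_f(X,Y) \;=\; \int_{0}^{i\infty} \bigl(f(\tau)-a_0\bigr)(X-\tau Y)^{k-2}\, d\tau \;+\; (\text{rational correction}),
\]
whose even part is $P^{ev}_f(X,Y)$. The first step is to write $T_n f = n^{k-1}\sum_M f\bigl|_k M$ using a set of coset representatives for $\Gamma\backslash\operatorname{M}_n$ and substitute into this integral representation, getting a sum of integrals along the transformed paths $M\cdot[0,i\infty]$.

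Next I change variables in each integral so that $f$ appears evaluated at $\tau$ while the polynomial factor is acted on by $M$. This converts the question into one about paths from $M\cdot 0 = b/d$ to $M\cdot i\infty = a/c$ in the upper half-plane (with appropriate limits when $c=0$), weighted by $\bigl((X-\tau Y)^{k-2}\bigr)\bigl|M$. The relations $1+S = 0$ and $1+U+U^2=0$ on $W_k$ are precisely the Manin-style relations that allow one to deform any such geodesic path into a sum of standard paths from $0$ to $i\infty$; applying the definition of $P^{ev}_f$ and these relations, each matrix $M\in\operatorname{M}_n$ contributes an action on $P^{ev}_f$.

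The combinatorial heart of the proof is to identify a canonical set of coset representatives and path-decomposition such that each representative contributes exactly one of the matrices in the definition of $\tilde T_n$. Concretely, splitting cosets according to whether the corresponding path endpoints are both finite and nonzero, involve $\infty$, or involve $0$, yields the three sums in $\tilde T_n$: the matrices with $a>c>0,\ d>-b>0$ (together with their $S$-conjugates $\bigl(\begin{smallmatrix}a & -b\\-c & d\end{smallmatrix}\bigr)$) come from interior paths, while the upper- and lower-triangular contributions (with the truncation ranges $-d/2<b\le d/2$ and $-a/2<c\le a/2$) come from paths ending at the cusps $\infty$ and $0$ respectively, after using the reflection relations to fold each orbit to a unique representative.

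The main obstacle will be this combinatorial bookkeeping: verifying that the decomposition of $\Gamma\backslash\operatorname{M}_n$ together with the folding provided by $1+S$ and $1+U+U^2$ produces \emph{exactly} the three families in $\tilde T_n$ with the stated inequality ranges, and that no double-counting occurs. Once this is established, projecting onto $W_k^{ev}$ yields \eqref{eq:tnthm}. (A small additional point: one must check that the rational correction term coming from the non-cuspidal part of $f$ is also compatible with $T_n$, which follows from the fact that $T_n$ multiplies the constant term $a_0$ by $\sigma_{k-1}(n)$ and the Eisenstein period polynomial transforms correspondingly.)
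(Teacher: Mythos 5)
First, a point of comparison: the paper does not actually prove this theorem --- its ``proof'' is a citation of Theorem 2 of \cite{Z1} and Theorem 3 of \cite{CZ}. Your outline follows the strategy of those references (Eichler integral representation, coset decomposition of $\SL_2(\Z)\backslash\operatorname{M}_n$, Manin-style deformation of geodesic paths using the relations $1+S$ and $1+U+U^2$), so the approach is the right one in spirit.

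Nevertheless there is a genuine gap, and you name it yourself: the ``combinatorial bookkeeping'' you defer is not a routine verification but is the entire content of the theorem. The machinery you set up only shows that $P^{ev}_{T_nf}=P^{ev}_f|T$ for \emph{some} $T\in\Q[\operatorname{M}_n]$, well defined only up to the choice of coset representatives and connecting paths, i.e.\ up to elements of the form $(1+S)\alpha+(1+U+U^2)\beta$ which act trivially on $W_k$. The assertion to be proved is that one admissible choice is the specific element $\tilde T_n$, with its particular inequalities $a>c>0$, $d>-b>0$ and the half-open ranges $-\tfrac d2<b\le\tfrac d2$, $-\tfrac a2<c\le\tfrac a2$; nothing in your sketch derives these. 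Note, for instance, that $\tilde T_n$ contains strictly more than $\sigma_1(n)$ matrices for $n\ge 2$ (its second sum alone has $\sigma_1(n)$ terms), so it is \emph{not} a set of coset representatives for $\SL_2(\Z)\backslash\operatorname{M}_n$, and the folding by $1+S$ and $1+U+U^2$ must genuinely generate the first family with the correct multiplicities --- this is exactly the identity in $\Q[\operatorname{M}_n]$ that Zagier and Choie--Zagier establish, and it must either be reproduced or cited. A second, smaller issue: for non-cuspidal $f$ the integral $\int_0^{i\infty}(f(\tau)-a_0)(X-\tau Y)^{k-2}\,d\tau$ diverges at the endpoint $0$, so the ``rational correction'' is not an afterthought; one needs the extended period polynomial of \cite{Z2} together with a proof that it still satisfies the cocycle relations, or else one proves the cusp-form case and settles the Eisenstein case directly from $P^{ev}_{G_k}=L^*_{G_k}(1)(X^{k-2}-Y^{k-2})$, in the spirit of Corollary \ref{cor:divisorsum}.
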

\begin{proof}
This is Theorem 2 in \cite{Z1} or Theorem 3 in \cite{CZ}.
\end{proof}

\section{Modular forms as $q$-analogues of double zeta values} \label{sec:mfasqana}
To make notations shorter we define the following pairing of a polynomial $P(X,Y) \in \C[X,Y]$ and an element $T = \sum_{\gamma} \alpha_\gamma \gamma \in \Q[M_n]$
\begin{align*}
\langle P , T \rangle := \sum_{\gamma \,=\, {\tiny \abcd}} \alpha_\gamma P(b,d)\,.
\end{align*}
With this we obtain the following consequence of Theorem \ref{thm:hoponperiod}, which gives an explicit formula for the Fourier coefficients of Hecke eigenforms. 
\begin{lemma}\label{lem:anformula} Let $f = \sum_{n\geq 0} a_n q^n \in M_k$ be a Hecke eigenform, i.e. $T_nf = a_n f$, then we have for $n\geq 1$
\[a_n = -\frac{1}{L_f^\ast(1)} \langle P^{ev}_f , \tilde{T}_n \rangle \,.\]
\end{lemma}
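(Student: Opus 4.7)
The plan is to deduce the formula from Theorem \ref{thm:hoponperiod} essentially by evaluation at a single point. The first observation I would make is that the pairing $\langle P, T \rangle$ is nothing other than $(P|T)(0,1)$. Indeed, from the definition of the action in \eqref{eq:defactions}, for any $\gamma = \left(\begin{smallmatrix} a & b \\ c & d \end{smallmatrix}\right)$ one has $(P|\gamma)(0,1) = P(b,d)$, and extending this linearly to $\Q[\operatorname{M}_n]$ yields
\[ \langle P, T \rangle = (P|T)(0, 1)\,. \]

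Next, I would combine the Hecke-eigenform property $T_n f = a_n f$ with Theorem \ref{thm:hoponperiod} to get
\[ P^{ev}_f \,|\, \tilde{T}_n \;=\; P^{ev}_{T_n f} \;=\; a_n\, P^{ev}_f\,, \]
so evaluating at $(0,1)$ collapses the statement to the scalar identity
\[ \langle P^{ev}_f, \tilde{T}_n \rangle \;=\; a_n\, P^{ev}_f(0,1)\,. \]
The problem is thereby reduced to computing $P^{ev}_f(0,1)$ explicitly in terms of $L_f^*(1)$.

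For this last step I would simply read off from the definition of $P^{ev}_f$ that at $X=0$ only the monomial with $r-1=0$ survives, that is $r=1$ and $s=k-1$ (which is odd, since $k$ is even). This contributes $(-1)^{(k-2)/2} L_f^*(k-1)$. Applying the functional equation $L_f^*(s) = (-1)^{k/2} L_f^*(k-s)$ at $s=k-1$ gives $L_f^*(k-1) = (-1)^{k/2} L_f^*(1)$, and combining the two signs produces $(-1)^{k-1} L_f^*(1) = -L_f^*(1)$. Substituting back yields the claimed identity. There is no real obstacle here: the whole argument is a substitution of definitions, and the only place where one has to be careful is the sign bookkeeping in the functional-equation step, which works out cleanly precisely because $k$ is even.
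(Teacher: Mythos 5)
Your proof is correct and is essentially the paper's own argument: evaluate the identity $P^{ev}_{T_nf}=P^{ev}_f|\tilde{T}_n$ from Theorem \ref{thm:hoponperiod} at $(X,Y)=(0,1)$, note that the pairing is exactly this evaluation, and compute $P^{ev}_f(0,1)=(-1)^{\frac{k-2}{2}}L_f^*(k-1)=-L_f^*(1)$ via the functional equation. The only point the paper adds is the justification that $L_f^*(1)\neq 0$ (from the location of the zeros of $L_f^*$ for cuspidal eigenforms, plus a direct check for $G_k$), which you implicitly use when dividing by $L_f^*(1)$ in the final step.
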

\begin{proof} It is is well-known that zeros of $L_f^\ast(s)$, for a cuspidal Hecke eigenform $f$, can only occur  inside the critical strip $\frac{k-1}{2}<\Re(s)<\frac{k+1}{2}$, and in particular $L_f^\ast(1) \neq 0$. Also $L_{G_k}^\ast(1) \neq 0$ for the normalized Eisenstein series $G_k$.
Setting $(X,Y)=(0,1)$ in \eqref{eq:tnthm}, the left-hand side becomes  $P^{ev}_{T_n f}(0,1) = a_n P^{ev}_{f}(0,1) = a_n (-1)^{\frac{k-2}{2}}L_f^\ast(k-1) =-a_n L_f^\ast(1)$ and the right-hand side is by definition of the pairing given by  $P^{ev}_f |{\tilde{T}_n}(1,0) = \langle P^{ev}_f , \tilde{T}_n \rangle $, from which the statement follows. 
\end{proof}

\begin{remark}
A similar formula as in Lemma \ref{lem:anformula} for the Fourier coefficients of cusp forms was already given by Manin in \cite[Section 1.3]{Man}. 
\end{remark}
\begin{corollary}\label{cor:divisorsum}For even $k\geq 4$ and $n\geq 1$ we have
\[\sigma_{k-1}(n) = \sum_{d|n} d^{k-1} = \langle  Y^{k-2}-X^{k-2} , \tilde{T}_n \rangle\,. \]
\end{corollary}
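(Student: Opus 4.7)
The natural strategy is to apply Lemma \ref{lem:anformula} to the normalized Eisenstein series $G_k \in M_k$, which is a Hecke eigenform with $n$-th Fourier coefficient equal to $\sigma_{k-1}(n)$ for $n \geq 1$. The lemma then reduces the claim to establishing the identity of period polynomials
\[
P^{ev}_{G_k}(X,Y) \;=\; -L^*_{G_k}(1)\bigl(Y^{k-2}-X^{k-2}\bigr),
\]
after which the nonzero scalar $L^*_{G_k}(1)$ cancels against the prefactor $-1/L^*_{G_k}(1)$ in Lemma \ref{lem:anformula}.

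The main computation is to pin down $P^{ev}_{G_k}$ explicitly. Using the Dirichlet-series factorization $L_{G_k}(s) = \zeta(s)\zeta(s-k+1)$, the completed $L$-function \eqref{eq:lser} reads $L^*_{G_k}(s) = (2\pi)^{-s}\Gamma(s)\zeta(s)\zeta(s-k+1)$. In the defining sum of $P^{ev}_{G_k}$ the index $s$ ranges over the odd integers $1,3,\dots,k-1$. For every such $s$ in the interior range $3 \leq s \leq k-3$, the shift $s-k+1$ is a negative even integer, so $\zeta(s-k+1)=0$ by the trivial zeros of $\zeta$. Only the two boundary indices survive: at $s=1$, the simple pole of $\zeta(s)$ compensates the simple trivial zero of $\zeta(s-k+1)$ to yield the finite nonzero value $L^*_{G_k}(1)$, while at $s=k-1$ the functional equation from Section \ref{sec:perpol} gives $L^*_{G_k}(k-1) = (-1)^{k/2} L^*_{G_k}(1)$.

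Feeding just these two contributions into the defining sum produces
\[
P^{ev}_{G_k}(X,Y) \;=\; L^*_{G_k}(1)\,X^{k-2} \,+\, (-1)^{(k-2)/2}\,L^*_{G_k}(k-1)\,Y^{k-2},
\]
and the sign simplification $(-1)^{(k-2)/2}\cdot(-1)^{k/2} = (-1)^{k-1} = -1$ (using that $k$ is even) immediately yields the desired identity. Plugging this back into Lemma \ref{lem:anformula} gives $\sigma_{k-1}(n) = \langle Y^{k-2} - X^{k-2}, \tilde{T}_n\rangle$, as claimed.

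The only slightly delicate point is handling the indeterminate form $\zeta(s)\zeta(s-k+1)$ at $s=1$ to see that $L^*_{G_k}(1)$ is finite and nonzero; everything else is a routine bookkeeping of signs and a direct identification of vanishing terms via the trivial zeros of $\zeta$.
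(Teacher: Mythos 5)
Your proof is correct and takes essentially the same route as the paper: apply Lemma \ref{lem:anformula} to the normalized Eisenstein series $G_k$ together with the identity $P^{ev}_{G_k}(X,Y) = L^*_{G_k}(1)\,(X^{k-2}-Y^{k-2})$. The only difference is that the paper simply cites \cite{Z2} for this identity, whereas you derive it yourself from the factorization $L_{G_k}(s)=\zeta(s)\zeta(s-k+1)$, the trivial zeros of $\zeta$, and the functional equation — a correct, self-contained filling-in of that reference.
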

\begin{proof}
This follows directly from Lemma \ref{lem:anformula}, since the normalized Eisenstein series $G_k(\tau)= -\frac{B_k}{2k} + \sum_{n>0} \sigma_{k-1}(n) q^n$ is a Hecke eigenform with $P^{ev}_{G_k} = L^*_{G_k}(1) (X^{k-2}-Y^{k-2})$ (see the first proposition in Section 2 of \cite{Z2}). 
\end{proof}

To proof Theorem \ref{thm:main1} we will calculate $\langle P^{ev}_f , \tilde{T}_n \rangle$ explicitly. First we define the \emph{even restricted period polynomial} $P^{ev,0}_f$ of a modular form $f \in M_k$  by
\begin{align}\begin{split} \label{eq:defp0}
P^{ev,0}_f(X,Y) &= P^{ev}_f(X,Y) - L_f^*(1)(X^{k-2}-Y^{k-2}) \\
&=\sum_{\substack{r+s=k\\r,s\geq 3 \text{ odd}}} (-1)^{\frac{s-1}{2}} \binom{k-2}{s-1} L_f^*(s) X^{r-1} Y^{s-1} \,. 
\end{split}
\end{align}

\begin{lemma}\label{lem:1}
For a cuspidal Hecke eigenform $f \in S_k$ we have
\[ f(q) =  (k-1)! \zeta_q(k) - \frac{1}{L_f^\ast(1)} \sum_{n>0} \langle P^{ev,0}_f , \tilde{T}_n \rangle q^n\,.\] 
\end{lemma}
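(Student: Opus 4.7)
The plan is to combine Lemma \ref{lem:anformula} with the decomposition of $P^{ev}_f$ into the Eisenstein-type piece $L_f^*(1)(X^{k-2}-Y^{k-2})$ and the restricted part $P^{ev,0}_f$, and then identify the Eisenstein-type contribution as the $q$-analogue $\zeta_q(k)$.

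First, since $f \in S_k$ has constant term zero, Lemma \ref{lem:anformula} gives
\[ f(q) = \sum_{n>0} a_n q^n = -\frac{1}{L_f^\ast(1)} \sum_{n>0} \langle P^{ev}_f , \tilde{T}_n \rangle q^n \,. \]
Next, by the very definition \eqref{eq:defp0},
\[ P^{ev}_f(X,Y) = P^{ev,0}_f(X,Y) + L_f^*(1)\bigl(X^{k-2}-Y^{k-2}\bigr), \]
so using linearity of the pairing $\langle \cdot , \tilde{T}_n\rangle$ in the first argument one obtains
\[ \langle P^{ev}_f , \tilde{T}_n \rangle = \langle P^{ev,0}_f , \tilde{T}_n \rangle - L_f^*(1)\,\langle Y^{k-2}-X^{k-2} , \tilde{T}_n \rangle \,.\]
By Corollary \ref{cor:divisorsum} the second pairing equals $\sigma_{k-1}(n)$, which brings the expression for $f(q)$ into the form
\[ f(q) = \sum_{n>0} \sigma_{k-1}(n)\, q^n \;-\; \frac{1}{L_f^\ast(1)} \sum_{n>0} \langle P^{ev,0}_f , \tilde{T}_n \rangle\, q^n \,. \]

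The last step is to recognize the Eisenstein-type sum as $(k-1)!\,\zeta_q(k)$. From the defining identity \eqref{eq:qk},
\[ \frac{Q_k(q^n)}{(1-q^n)^k} = \frac{1}{(k-1)!}\sum_{d>0} d^{k-1} q^{nd}, \]
and summing over $n>0$ and interchanging the summation order yields
\[ \zeta_q(k) = \frac{1}{(k-1)!} \sum_{m>0} \sigma_{k-1}(m)\, q^m. \]
Substituting back gives exactly the claimed formula.

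There is essentially no hard step here; the only thing to watch is the sign convention in Corollary \ref{cor:divisorsum} (which is stated for $Y^{k-2}-X^{k-2}$, not $X^{k-2}-Y^{k-2}$) and the fact that $L_f^\ast(1)\neq 0$ so division is legal, both of which were already handled in the preceding lemmas. The proof is thus a short assembly of the results established in Section \ref{sec:perpol} and the $q$-analogue generating-function identity from Section \ref{sec:qana}.
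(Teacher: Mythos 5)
Your proof is correct and follows exactly the same route as the paper's (which is stated much more tersely): apply Lemma \ref{lem:anformula}, split $P^{ev}_f$ via the definition \eqref{eq:defp0}, identify the divisor-sum contribution through Corollary \ref{cor:divisorsum}, and recognize $\sum_{n>0}\sigma_{k-1}(n)q^n=(k-1)!\,\zeta_q(k)$. The sign bookkeeping with $Y^{k-2}-X^{k-2}$ versus $X^{k-2}-Y^{k-2}$ is handled correctly, so nothing further is needed.
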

\begin{proof}
This follows by Corollary \ref{cor:divisorsum} together with \eqref{eq:defp0} and the fact that the coefficients of $\zeta_q(k)$ are given by the divisor-sum $\sigma_{k-1}(n)$, since
 \[\zeta_q(k) = \sum_{n>0} \frac{Q_k(q^n)}{(1-q^n)^k} =  \frac{1}{(k-1)!}\sum_{n>0} \sum_{d>0} d^{k-1} q^{dn} = \frac{1}{(k-1)!} \sum_{n>0} \sigma_{k-1}(n) q^n \,.\]
\end{proof}

It remains to evaluate $\langle P^{ev,0}_f , \tilde{T}_n \rangle$. For this we write $\tilde{T}_n = \tilde{T}^{(1)}_n + \tilde{T}^{(2)}_n + \tilde{T}^{(3)}_n$ with
\begin{align*}
\tilde{T}^{(1)}_n &= \sum_{\substack{ad-bc=n \\ a > c > 0\\d > -b > 0}} \left( \abcd + \begin{pmatrix} a & -b \\ -c & d \end{pmatrix} \right)\,, \\
\tilde{T}^{(2)}_n &= \sum_{\substack{ad=n\\ -\frac{d}{2} < b \leq \frac{d}{2} }} \begin{pmatrix} a & b \\ 0 & d \end{pmatrix} \,,\qquad \tilde{T}^{(3)}_n= \sum_{\substack{ad=n\\ -\frac{a}{2} < c \leq \frac{a}{2} \\ c \neq 0}} \begin{pmatrix} a & 0\\ c & d \end{pmatrix}\,.
\end{align*}
In the following we will calculate $\sum_{n>0} \langle P^{ev,0}_f , \tilde{T}^{(j)}_n \rangle q^n$ individually before combining them in the end for the proof of Theorem \ref{thm:main1}.

\begin{lemma} \label{lem:t1}For a cusp form $f \in S_k$ we have 
\begin{align*}
 \sum_{n>0} \langle P^{ev,0}_f , \tilde{T}^{(1)}_n \rangle q^n = -2(k-2)!\sum_{\substack{r+s=k\\ r,s\geq 2}} q^f_{r,s} \,\zav_q(r,s) \,,
\end{align*}
where the coefficients $ q^f_{r,s} $ are given by \eqref{eq:qrs}.
\end{lemma}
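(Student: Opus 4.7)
The plan is to unfold the pairing $\langle P^{ev,0}_f,\tilde{T}_n^{(1)}\rangle$ directly from the definitions, match the resulting generating series term-by-term with the Lambert-series expansion of $\zav_q(r,s)$, and then dispose of the boundary indices $r=1$ and $s=1$.

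First I would unfold the pairing. The polynomial $P^{ev,0}_f$ has only monomials $X^{r-1}Y^{s-1}$ with $r,s\geq 3$ odd, so it is even in each variable separately, i.e.\ $P^{ev,0}_f(-b,d)=P^{ev,0}_f(b,d)$; together with the Eichler-Shimura relation $P^{ev}_f|(1+S)=0$, this yields the antisymmetry $P^{ev,0}_f(Y,X)=-P^{ev,0}_f(X,Y)$ (which also holds for $P^{ev,0}_f$ since the correction term $X^{k-2}-Y^{k-2}$ is antisymmetric). Applying evenness to the two matrices in the definition of $\tilde{T}_n^{(1)}$ and making the change of variable $b\mapsto -b$ in the resulting sum, I expect to arrive at
\[\sum_{n>0}\langle P^{ev,0}_f,\tilde{T}_n^{(1)}\rangle q^n \;=\; 2\sum_{\substack{a>c>0\\ d>b>0}} P^{ev,0}_f(b,d)\, q^{ad+bc}\,.\]

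Next I would identify this with the $q$-expansion of $\zav_q(r,s)$. Writing $X=b$ and $Y=d-b$ (both positive), antisymmetry gives $P^{ev,0}_f(b,d)=P^{ev,0}_f(X,X+Y)=-P^{ev,0}_f(X+Y,X)$, which by \eqref{eq:qrs} equals $-\sum_{r+s=k,\,r,s\geq 1}\binom{k-2}{r-1}q^f_{r,s}\,b^{r-1}(d-b)^{s-1}$. In parallel, expanding $Q_k(t)/(1-t)^k=\tfrac{1}{(k-1)!}\sum_{d>0}d^{k-1}t^d$ in Definition \ref{def:qzeta} and then substituting $\beta:=c+a$ in the inner sum of
\[\zav_q(r,s) \;=\; \frac{1}{(r-1)!(s-1)!}\sum_{0<n<m}\sum_{c,a>0} c^{r-1}a^{s-1}q^{c(n+m)+am}\]
converts it into $\frac{1}{(r-1)!(s-1)!}\sum_{0<n<m}\sum_{\beta>\alpha>0}\alpha^{r-1}(\beta-\alpha)^{s-1}q^{\alpha n+\beta m}$. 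Under the relabelling $(n,m,\alpha,\beta)\mapsto(c,a,b,d)$ this matches the inner sum above term by term, giving $\sum_{a>c>0,\,d>b>0} b^{r-1}(d-b)^{s-1}q^{ad+bc}=(r-1)!(s-1)!\,\zav_q(r,s)$. Collecting factors and using $\binom{k-2}{r-1}(r-1)!(s-1)!=(k-2)!$ then produces the claimed expression, but with the summation range $r,s\geq 1$.

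Finally I would check that the boundary indices contribute nothing. Setting $X=0$ in \eqref{eq:qrs} isolates $q^f_{1,k-1}Y^{k-2}$ on the right, while the left-hand side $P^{ev,0}_f(Y,0)$ vanishes because every monomial of $P^{ev,0}_f$ contains $Y^{s-1}$ with $s\geq 3$; hence $q^f_{1,k-1}=0$. Setting $Y=0$ in \eqref{eq:qrs} isolates $q^f_{k-1,1}X^{k-2}$, while $P^{ev,0}_f(X,X)=0$ by antisymmetry; hence $q^f_{k-1,1}=0$. Both boundary coefficients thus vanish and the sum collapses to $r,s\geq 2$. The only real obstacle I foresee is the bookkeeping for the two successive changes of variables together with the sign produced by antisymmetry; once those are pinned down, the lemma follows by direct comparison.
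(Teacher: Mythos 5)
Your proof is correct and follows essentially the same route as the paper: unfold $\tilde{T}^{(1)}_n$ using the evenness of $P^{ev,0}_f$ in each variable, convert $P^{ev,0}_f(b,d)$ via antisymmetry into the coefficients $\qrs$ of $P^{ev,0}_f(X+Y,X)$, match the resulting double sum with the expansion of $\zav_q(r,s)$, and check that the boundary coefficients with $r=1$ or $s=1$ vanish. The only cosmetic differences are that the paper performs the change of variables by shifting $d\mapsto b+d$ inside the pairing sum rather than reindexing the expansion of $\zav_q(r,s)$, and deduces $q^f_{k-1,1}=0$ from the relation $P^{ev,0}_f|(1+U+U^2)=0$ where you use $P^{ev,0}_f(X,X)=0$ directly.
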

\begin{proof}
By direct calculation and the fact that $P^{ev,0}$ is an even polynomial, we obtain 
\begin{align*}
\sum_{n>0} \langle P^{ev,0}_f , \tilde{T}^{(1)}_n \rangle q^n  &= \sum_{\substack{a > c > 0\\d > -b > 0}} \left(P_f^{ev,0}(b,d) + P_f^{ev,0}(-b,d) \right) q^{ad-bc} \\
&= 2 \sum_{\substack{a > c > 0\\d > b > 0}} P_f^{ev,0}(b,d) q^{ad+bc} =  2 \sum_{\substack{a > c > 0\\d,b > 0}} P_f^{ev,0}(b,b+d) q^{a(d+b)+bc}\,.
\end{align*}
Using $P_f^{ev,0}(b,b+d) = - P_f^{ev,0}(b+d,b)$ and the definition of $\qrs$ as coefficients of $P_f^{ev,0}(X+Y,X)$,  we can write 
\begin{align}\label{eq:lem451}
\begin{split}
\sum_{n>0} \langle P^{ev,0}_f , \tilde{T}^{(1)}_n \rangle q^n  &= -2 \sum_{\substack{r+s=k\\ r,s\geq 1}} \binom{k-2}{r-1} \qrs \sum_{\substack{a > c > 0\\d,b > 0}} b^{r-1} d^{s-1} q^{a(d+b)+bc}\\
&=-2(k-2)!  \sum_{\substack{r+s=k\\ r,s\geq 1}} \qrs \sum_{\substack{a > c > 0\\d,b > 0}} \frac{b^{r-1}}{(r-1)!} \frac{d^{s-1}}{(s-1)!}q^{(a+c)b+ad} \,.
\end{split}
\end{align}
By the definition of $\zav(r,s)$ (see Definition \ref{def:qzeta}) we have
\begin{equation}\label{eq:lem452}
\zav_q(r,s) = \sum_{a > c > 0} \frac{Q_r(q^{a+c}) }{ (1-q^{a+c})^{r} } \frac{ Q_s(q^{a})}{ (1-q^a)^{s}} = \sum_{\substack{a > c > 0\\d,b > 0}} \frac{b^{r-1}}{(r-1)!} \frac{d^{s-1}}{(s-1)!}q^{(a+c)b+ad}\,. 
\end{equation}
Since $P_f^{ev,0}(X,0)=P_f^{ev,0}(0,Y)=0$ and $P_f^{ev,0}|(1+U+U^2)=0$ it follows that $ q^f_{r,1}= q^f_{1,s} =0$. Combining this together with \eqref{eq:lem451} and \eqref{eq:lem452} we obtain the desired result. 
\end{proof}

To evaluate $\sum_{n>0} \langle P^{ev,0}_f , \tilde{T}^{(2)}_n \rangle q^n$ we will introduce some further notation. For $k\geq 1$ we define the \emph{even and odd $q$-analogues of the single zeta value} by
\[\ze_q(k) = \sum_{\substack{a,d>0\\d \text{ even}}} \frac{d^{k-1}}{(k-1)!} q^{ad}\,,\qquad \zo_q(k) = \sum_{\substack{a,d>0\\d \text{ odd}}} \frac{d^{k-1}}{(k-1)!} q^{ad}\,.\] 

\begin{lemma} \label{lem:t2} For a cusp form $f \in S_k$ with even restricted period polynomial 
\begin{align*}
P^{ev,0}_f(X,Y)=\sum_{\substack{r+s=k\\r,s\geq 3 \text{ odd}}} c_{r,s} X^{r-1} Y^{s-1}
\end{align*}
we have 
\begin{align*}
 \sum_{n>0} \langle P^{ev,0}_f , \tilde{T}^{(2)}_n \rangle q^n = \Bigg( \sum_{\substack{r+s=k\\r,s\geq 3 \text{ odd}}} \frac{ c_{r,s}}{r 2^{r-1}}\Bigg)  (k-1)!\zeta_q(k) + 2(k-2)! R_f(q)\,,
\end{align*}
where the $q$-series $R_f(q)$ is given by
\begin{align*}
 R_f(q) &= \sum_{\substack{r+s=k\\r,s\geq 3 \text{ odd}}} c_{r,s}  \left( \sum_{j=1}^{r-1} \binom{r}{j}\frac{B_j \cdot (k-j-1)!}{r 2^{r-j} (k-2)!}  \,\zeta_q(k-j) - \frac{1}{2^{r}}\, \ze_q(k-1) \right)\\
&+ \sum_{\substack{r+s=k\\r,s\geq 3 \text{ odd}}} c_{r,s}  \sum_{\substack{0 \leq j \leq r-1\\1 \leq l \leq r-j}} \binom{r}{j} \binom{r-j}{l} \frac{(-1)^l B_j \cdot (k-j-l-1)!}{r 2^{r-j} (k-2)!} \,\zo_q(k-j-l)\,.
\end{align*}

\end{lemma}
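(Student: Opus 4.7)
The plan is a direct calculation. Writing out the definition of the pairing and substituting $P^{ev,0}_f(b,d) = \sum_{r,s} c_{r,s}\, b^{r-1}d^{s-1}$, the task reduces to evaluating
\[\sum_{\substack{r+s=k \\ r,s \geq 3 \text{ odd}}} c_{r,s} \sum_{a, d > 0} d^{s-1} S_r(d)\, q^{ad}, \qquad S_r(d) := \sum_{-d/2 < b \leq d/2} b^{r-1}.\]
Since $r$ is odd, $b^{r-1}$ is an even function of $b$, and Faulhaber's formula (splitting on the parity of $d$) yields $S_r(d) = 2(B_r((d+1)/2)-B_r)/r$ for $d$ odd and $S_r(d) = 2(B_r(d/2)-B_r)/r + (d/2)^{r-1}$ for $d$ even, where the last term comes from the one-sided endpoint $b=d/2$.

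Next I expand each Bernoulli polynomial via $B_r(x) = \sum_{j=0}^r \binom{r}{j}B_j x^{r-j}$ (the $j=r$ term cancelling against $-B_r$), and in the odd case first shift $B_r((d+1)/2) = B_r((d-1)/2) + r((d-1)/2)^{r-1}$ (using $B_r(x+1) - B_r(x) = rx^{r-1}$), so that the subsequent expansion $(d-1)^{r-j} = \sum_l \binom{r-j}{l}(-1)^l d^{r-j-l}$ produces the $(-1)^l$ signs of the statement. The geometric sums $\sum_{a,d>0,\, d\text{ odd}} d^{m-1} q^{ad} = (m-1)!\,\zo_q(m)$, $\sum_{a,d>0,\, d\text{ even}} d^{m-1} q^{ad} = (m-1)!\,\ze_q(m)$, and their total $(m-1)!\,\zeta_q(m)$, then convert each $d$-monomial into a $q$-analogue of a zeta value.

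Finally I sort the resulting double sum according to which $q$-zeta it produces. The $(j,l)=(0,0)$ contributions from the two parities recombine via $\ze_q+\zo_q=\zeta_q$ into the main term $\bigl(\sum_{r,s} c_{r,s}/(r 2^{r-1})\bigr)(k-1)!\,\zeta_q(k)$; the $l=0,\, j\geq 1$ pieces likewise combine into the $\zeta_q(k-j)$ summand inside $R_f$; the $l\geq 1$ pieces, which live only on the odd-$d$ side, produce the pure $\zo_q(k-j-l)$ double sum; and the $(d/2)^{r-1}$ correction from the even-$d$ case (together with the analogous shift-correction $2((d-1)/2)^{r-1}$ from the odd side) yields the isolated $\ze_q(k-1)$ term once the common factor $2(k-2)!$ is pulled out. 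The main obstacle is purely bookkeeping: choosing the Bernoulli-polynomial shifts so that the signs $(-1)^l$ appear as stated, and verifying that the even and odd contributions recombine into $\zeta_q$ exactly in the $l=0$ branch while remaining separated for $l\geq 1$; no deeper idea is required.
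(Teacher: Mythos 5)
Your overall strategy coincides with the paper's: reduce the pairing to $\sum_{r,s}c_{r,s}\sum_{a,d>0}d^{s-1}S_r(d)\,q^{ad}$, use the evenness of $b^{r-1}$ to fold the range $-d/2<b\le d/2$ onto positive $b$ (with the one-sided endpoint $b=d/2$ when $d$ is even), evaluate the inner power sum by a Faulhaber-type formula, split on the parity of $d$, and recombine $\ze_q+\zo_q=\zeta_q$ in the $l=0$ branch. The only structural difference is that the paper applies $\sum_{b=1}^{N}b^{r-1}=\frac1r\sum_{j=0}^{r-1}\binom rj B_j N^{r-j}$ directly with the convention $B_1=+\tfrac12$ (the convention in which the stated $R_f$ is written), whereas you go through Bernoulli polynomials in the standard convention $B_1=-\tfrac12$ together with the shift $B_r(x+1)-B_r(x)=rx^{r-1}$.

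There is, however, a concrete error in your final sorting step. The even-$d$ endpoint correction $(d/2)^{r-1}$ contributes $+\tfrac{1}{2^{r-1}}(k-2)!\,\ze_q(k-1)$, while the stated formula contains $2(k-2)!\cdot\bigl(-\tfrac{1}{2^{r}}\bigr)\ze_q(k-1)=-\tfrac{1}{2^{r-1}}(k-2)!\,\ze_q(k-1)$, so the sign is opposite; and the odd-side shift correction $2((d-1)/2)^{r-1}$ is supported on odd $d$ and, after expanding $(d-1)^{r-1}$ in powers of $d$, produces only $\zo_q$-type contributions, so it cannot feed into $\ze_q(k-1)$ at all. What actually happens is that these two corrections are precisely what is needed to convert your $B_1=-\tfrac12$ expansion into the $B_1=+\tfrac12$ form of the stated $R_f$: on the odd side the shift correction exactly cancels the $j=1$ discrepancy $\tfrac{(d-1)^{r-1}}{2^{r-2}}$, and on the even side the endpoint correction combines with that discrepancy to leave a residual $-(d/2)^{r-1}$, which is the true source of the $-\tfrac{1}{2^{r}}\ze_q(k-1)$ term. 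The identity itself is fine and your computation can be completed, but the bookkeeping as you describe it would not reproduce the stated $R_f$; you need either to redistribute the correction terms as above or to use the $B_1=+\tfrac12$ Faulhaber formula from the outset, and in any case to state which Bernoulli convention your $B_j$ refer to.
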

\begin{proof}
Again by using the fact that $P_f^{ev,0}$ is even and $P_f^{ev,0}(0,Y)=0$  we obtain
\begin{align}\label{eq:lemt21}
 \sum_{n>0} \langle P^{ev,0}_f , \tilde{T}^{(2)}_n \rangle q^n 
&=\sum_{\substack{r+s=k\\r,s\geq 3 \text{ odd}}} c_{r,s} \Bigg( 2 \sum_{\substack{a,d > 0\\ 0 < b \leq \frac{d}{2} }} b^{r-1} d^{s-1} q^{ad} -  \frac{1}{2^{r-1}} \sum_{\substack{a,d>0\\d \text{ even}}} d^{k-2} q^{ad} \Bigg) \,.
\end{align}
Now by using the well-known formula
\[ \sum_{b=1}^{N} b^{r-1} =  \frac{1}{r} \sum_{j=0}^{r-1} \binom{r}{j} B_j N^{r-j} \,, \]
where $B_j$ denotes the Bernoulli numbers of the second kind, i.e. $B_1 = \frac{1}{2}$, we get by a straightforward calculation for $r+s=k$
\begin{align*}
\sum_{\substack{a,d > 0\\ 0 < b \leq \frac{d}{2} }} b^{r-1} d^{s-1} q^{ad} &= \frac{(k-1)!}{r 2^r} \zeta_q(k) + \sum_{j=1}^{r-1} \binom{r}{j}\frac{B_j \cdot(k-j-1)!}{r 2^{r-j}} \zeta_q(k-j) \\
&+ \sum_{\substack{0 \leq j \leq r-1\\1 \leq l \leq r-j}} \binom{r}{j} \binom{r-j}{l} \frac{(-1)^l  B_j \cdot (k-j-l-1)! }{r 2^{r-j}}\zo_q(k-j-l)\,.
\end{align*}
Combining this with \eqref{eq:lemt21}, yields the result stated. 
\end{proof}
\begin{proof}[Proof of Theorem \ref{thm:main1}]
The statement of Theorem $\ref{thm:main1}$ follows by combining  Lemma \ref{lem:1} ,\ref{lem:t1} ,\ref{lem:t2} and the fact that $ \sum_{n>0} \langle P^{ev,0}_f , \tilde{T}^{(3)}_n \rangle q^n = 0$.
\end{proof}

\begin{proof}[Proof of Corollary \ref{cor:main1}]
In \cite[Proposition 7.2]{BK} it was shown that for a $q$-series $f(q)=\sum_{n>0} a_n q^n$ with $a_n = O(n^{K-1})$ and $K < k$ one has $\lim_{q\rightarrow 1} (1-q)^k f(q)=0$. The coefficients of $\zeta_q(K),\ze_q(K)$ and $\zo_q(K)$ are all in $O(n^{K-1})$ and since in the definition of $R_f$ just the cases $K < k$ (where $k$ is the weight of $f$) appear, we get $\lim_{q\rightarrow 1} (1-q)^k R_f(q)=0$. Corollary \ref{cor:main1} therefore follows from Lemma \ref{lem:qana} and Theorem \ref{thm:main1}.
\end{proof}

\begin{ex}
We give one example for Theorem \ref{thm:main1}. The period polynomial of $f = (45 L^*_{\Delta}(9))^{-1} \Delta$ is given by
\[ P^{ev}_f(X,Y)=\frac{36}{691}(X^{10}-Y^{10}) - X^2 Y^2 (X^2-Y^2)^3\,.\]
In this case the $q$-series $R_f(q)$ can be written as
\begin{align*}
R_f(q) &= \frac{1}{5} \zeta_q(4) +\frac{40}{21} \zeta_q(6) +21 \zeta_q(8)- \frac{51}{128} \zo_q(4)  - \frac{15}{4}  \zo_q(6)- \frac{315}{8} \zo_q(8)\,. 
\end{align*}
By Theorem \ref{thm:main1} we therefore obtain the following expression for $\Delta$
\begin{align*}
\frac{\Delta(q)}{221120} &= \frac{1639}{176896} \zeta_q(12) - \frac{1}{11520} R_f(q)\\
&-\left(14 \zav_q(3, 9)+42 \zav_q(4, 8)+75 \zav_q(5, 7) + 95 \zav_q(6, 6) +
 84 \zav_q(7, 5) + 42 \zav_q(8, 4) \right)\,,
\end{align*}
from which the relation \eqref{eq:zavrel} in the introduction follows after multiplying both sides by $(1-q)^{12}$ and taking the limit $q\rightarrow 1$.
\end{ex}
\begin{proof}[Proof of Theorem \ref{thm:main2}] By Corollary \ref{cor:divisorsum} we have
\begin{align*}
 (k-1)! \zeta_q(k) = \sum_{n>0} \langle Y^{k-2}- X^{k-2} , \tilde{T}_n \rangle q^n \,.
\end{align*} 
With similar calculations as in Lemma \ref{lem:t1} and \ref{lem:t2} one can give explicit formulas for $\sum_{n>0} \langle Y^{k-2}- X^{k-2} , \tilde{T}^{(1)}_n \rangle q^n$ and $\sum_{n>0} \langle Y^{k-2}- X^{k-2} , \tilde{T}^{(2)}_n \rangle q^n$. Together with 
\begin{align*}
\sum_{n>0} \langle Y^{k-2}- X^{k-2} , \tilde{T}^{(3)}_n \rangle q^n &= (k-3)! q \frac{d}{dq} \zeta_q(k-2)-(k-2)! \zeta_q(k-1)
\end{align*}
one then can check that Theorem \ref{thm:main2} holds with the $q$-series $E_k(q)$ given by
\begin{align}\label{eq:ek}\begin{split}
E_k(q) =& \,2^{k-2} \zeta_q(k-1)-\frac{2^{k-2} }{(k-2)} q \frac{d}{dq} \zeta_q(k-2) + \sum_{j=2}^{k-2} \frac{2^j B_j }{j!} \zeta_q(k-j)\\
&+ \sum_{\substack{0 \leq j \leq k-2\\1 \leq l \leq k-j-1\\(l,j)\neq(1,0)}} \frac{(-1)^l 2^j B_j }{j!\, l!} \zo_q(k-j-l)\,.
\end{split}
\end{align}
\end{proof}

\begin{proof}[Proof of Theorem \ref{cor:main2}]
For even weight $k$ Theorem \ref{cor:main2} follows from Theorem  \ref{thm:main2} with the same arguments as given for Corollary \ref{cor:main1}. To prove the odd weight case, we first observe that the modified double zeta value can be written as
\begin{align}\begin{split}\label{eq:mdavasli}
\zav(r,s)&=\sum_{0 < m < n} \frac{1}{ (m+n)^{r} n^{s}} = \sum_{0 < m < n < 2n} \frac{1}{ m^{r} n^{s}} = \left(\sum_{0 < m < 2n} - \sum_{0 < m < n} - \sum_{0 < m = n} \right)\frac{1}{ m^{r} n^{s}} \\
&=2^{s-1}  (\Li_{r,s}(-1)  + \zeta(r,s)) - \zeta(r,s) - \zeta(r+s)\,,
\end{split}
\end{align}
where $\Li_{r,s}(z)=\sum_{0 < m < n} \frac{z^n}{m^{r} n^{s}}$ denotes the double polylogarithm. When $k=r+s$ is odd, it is known, due to the parity result for double polylogarithms  (see \cite[(75)]{BBB}), that $\Li_{r,s}(z)$ can be  written explicitly in terms of single polylogarithms. From this one can deduce together with $\Li_k(1) + \Li_k(-1)=\frac{1}{2^{k-1}} \Li_k(1) = \frac{1}{2^{k-1}} \zeta(k)$, that for odd $k$
\[\sum_{\substack{r+s=k\\r\geq 1,s\geq 2}} 2^{s-1} \Li_{r,s}(-1) = \frac{1}{2^{k-1}} \zeta(k) + \frac{k-3}{2} \zeta(k)\,. \]
Now using the following sum formulas for double zeta values (see \cite{OZ}) 
\begin{align*}
\sum_{\substack{r+s=k\\r\geq 1,s\geq 2}} \zeta(r,s) = \zeta(k) \,,\qquad \sum_{\substack{r+s=k\\r\geq 1,s\geq 2}} 2^{s-1}\zeta(r,s) = \frac{(k+1)}{2}\zeta(k)\,,
\end{align*}
we obtain together with \eqref{eq:mdavasli} 
\begin{align*}
\sum_{\substack{r+s=k\\r\geq 1,s\geq 2}} \zav(r,s) &= \frac{1}{2^{k-1}} \zeta(k) + \frac{k-3}{2} \zeta(k)+ \frac{(k+1)}{2}\zeta(k) - (k-1)\zeta(k) = \frac{1}{2^{k-1}} \zeta(k)\,.
\end{align*}
\end{proof}
We end this note by giving examples for Theorem \ref{thm:main2} and some general remarks.

\begin{ex} For $k=4,6$ Theorem \ref{thm:main2} gives the following expressions for $\zeta_q(k)$.
\begin{align*}
\zeta_q(4) &= 8 \left( \zav_q(1,3)+ \zav_q(2,2) \right)- \frac{1}{3} \zeta_q(2)-4 \zeta_q(3)+\frac{1}{2} \zo_q(2) +2 q\frac{d}{dq}\zeta_q(2)\,,\\
\zeta_q(6) &=32 \left( \zav_q(1,5)+\zav_q(2,4)+\zav_q(3,3)+\zav_q(4,2)\right)+\frac{1}{45} \zeta_q(2) - \frac{1}{3}  \zeta_q(4) -16 \zeta_q(5)\\
&-\frac{1}{24}  \zo_q(2)+\frac{1}{2} \zo_q(4)+4 q\frac{d}{dq}\zeta_q(4)\,.
\end{align*}
\end{ex}

\begin{remark}
Numerically also Theorem \ref{thm:main2} holds for any $k\geq 3$ and Theorem \ref{cor:main2} should be a Corollary of this general version. But since we are using the period polynomials of the Eisenstein series, our proof of Theorem \ref{thm:main2} is just valid for even $k \geq 4$.
\end{remark}
\begin{remark}\label{rem:gkzrel}
The result for the classical case of double zeta values, given in \cite[Theorem 3]{GKZ},  focuses on relations among $\zeta(r,s)$, where $r$ and $s$ are both odd. Their result is that for a cusp forms $f\in S_k$ the following relation holds
\begin{equation}\label{eq:gkzthm}
\sum_{\substack{r+s = k\\r,s \geq 3 \text{:odd}}} \qrs \zeta(r,s) = \beta_f \, \zeta(k) \,, 
\end{equation} 
where the coefficients $\qrs$ are given by \eqref{eq:qrs} and the coefficient $ \beta_f$ (first explicitly written down by Ma and Tasaka in \cite[Corollary 2.3]{MT}) is given by
\[ \beta_f = -\frac{1}{2} \left(\frac{k-1}{2} L_f^*(1) +\sum_{\substack{r+s = k\\r,s \geq 3 \text{:odd}}} \qrs \right)\,.\]
For integers $a,b \in \Z_{\geq 1} $ the coefficients $\qrs$ satisfy $q^f_{2a,2b}=q^f_{2b,2a}$. Together with the well-known fact that $\zeta(2a)\zeta(2b)\in \zeta(2(a+b))\Q$ and the harmonic product formula
\[\zeta(2a)\zeta(2b)=\zeta(2a,2b)+\zeta(2b,2a)+\zeta(2(a+b))\,,\]
one obtains that also $\sum_{\substack{r+s = k\\r,s \geq 2}} \qrs \zeta(r,s)$ is a multiple of $\zeta(k)$. This gives the relation \eqref{gkzrel} in the introduction as a consequence of the famous relation
\[28 \zeta(3,9)+150 \zeta(5,7) + 168 \zeta(7,5) = \frac{5197}{691}\zeta(12)\,, \]
which follows from \eqref{eq:gkzthm} by taking for $f$ a certain multiple of $\Delta$. For the modified double zeta values the harmonic product formula does not hold and therefore it is not clear if one can reduce our result to the case where $r$ and $s$ are both odd.
\end{remark}

\begin{remark}
In \cite{KT} the authors introduced (using a different order) double zeta values of level $2$ given for $r\geq 1, s\geq 2$ by 
\begin{align*}
\zoe(r,s) = \sum_{\substack{0 < m < n\\m \text{ odd},\, n \text{ even}}} \frac{1}{m^{r} n^{s}} \,,\quad\zee(r,s) = \sum_{\substack{0 < m < n\\m \text{ even},\, n \text{ even}}} \frac{1}{m^{r} n^{s}} \,.
\end{align*}
These are related to the modified and the usual double zeta values by (see \eqref{eq:mdavasli})
\begin{align*}
\zav(r,s)=2^s  (\zoe(r,s)  + \zee(r,s)) - \zeta(r,s) - \zeta(r+s)\,.
\end{align*}
Combining Corollary \ref{cor:main1} and the period polynomial relations for classical double zeta values (Remark \ref{rem:gkzrel}), one could therefore also explicitly write down  period polynomial relations and sum formulas for the values $2^s  (\zoe(r,s)  + \zee(r,s))$.
\end{remark}


\begin{thebibliography}{99}
\bibitem[BK]{BK}  H.~Bachmann, U.~K\"uhn:
{\itshape The algebra of generating functions for multiple divisor sums and applications to multiple zeta values}, Ramanujan J. {\bf 40} (2016), 605--648. 

\bibitem[BBB]{BBB} J.~Borwein, D.~Bradley, D.~Broadhurst:
{\itshape Evaluations of k-fold Euler/Zagier sums: a compendium of results for arbitrary k}, Electronic J. Combin. {\bf 4} (1997), no. 2, 1--21.

\bibitem[CZ]{CZ} Y.~Choie, D.~Zagier:
{\it Rational period functions for PSL(2,Z)},
Contemp. Math., {\bf 143}, Amer. Math. Soc. (1993), 89--108. 

\bibitem[GKZ]{GKZ} H.~Gangl, M.~Kaneko, D.~Zagier:
{\it Double zeta values and modular forms}, in "Automorphic forms and zeta functions" World Sci. Publ., Hackensack, NJ (2006), 71--106.

\bibitem[KZ]{KZ} W.~Kohnen, D.~Zagier:
{\it Modular forms with rational periods. Modular forms}, Ellis Horwood Ser. Math. Appl.: Statist. Oper. Res., Horwood, Chichester  (1984), 197--249.

\bibitem[MT]{MT} D.~Ma, K.~Tasaka:
{\itshape Relationship between multiple zeta values of depths 2 and 3 and period polynomials}, preprint, arXiv:1707.08178.

\bibitem[Man]{Man} Y.~Manin:
{\itshape Periods of parabolic forms and $p$-adic Hecke series},  Mat. Sb. {\bf 21} (1973), 371--393.

\bibitem[Mat]{Mat} K.~Matsumoto:
{\itshape  On Mordell-Tornheim and other multiple zeta-functions},  Proceedings of the Session in Analytic Number Theory and Diophantine Equations, Bonner Math. Schriften {\bf 360} (2003), 1--17.

\bibitem[KT]{KT} M.~Kaneko, K.~Tasaka:
{\itshape  Double zeta values, double Eisenstein series, and modular forms of level 2},
Math. Ann. {\bf 357} (2013), no. 3, 1091--1118.

\bibitem[O]{O} T.~Okamoto:
{\itshape Some relations among Apostol-Vu double zeta values for coordinatewise limits at non-positive integers}, Tokyo J. Math. {\bf 34} (2011), no. 2, 353--366. 

\bibitem[OZ]{OZ} Y.~Ohno, W.~Zudilin:
{\itshape Zeta stars}, Communication in number theory and physics, Volume {\bf 2}, Number 2  (2008), 325--347. 

\bibitem[Z1]{Z1} D.~Zagier:
{\itshape Hecke operators and periods of modular forms}, Israel Math. Conf. Proc. {\bf 3} (1990), 321--336.


\bibitem[Z2]{Z2} D.~Zagier:
{\itshape Periods of modular forms and Jacobi theta functions},
Invent. Math. {\bf 104} (1991), no. 3, 449--465.


\bibitem[Z3]{Z3} D.~Zagier:
{\itshape Periods of modular forms, traces of Hecke operators, and multiple zeta values},
RIMS Kokyuroku {\bf 843} (1993), 162--170.



\end{thebibliography}
\end{document}